
\documentclass[11pt,a4paper]{article}
\usepackage{mymacros}

\newenvironment{ack}{\section*{Acknowledgements}}{}

\usepackage[titles]{tocloft}
\usepackage{stmaryrd}
\usepackage{cite}
\usepackage{enumerate}
\setlength{\cftbeforesecskip}{0.4ex}

\title{Polynomial rate of relaxation for the Glauber dynamics of infinite-volume critical Ising model}

\author{Haoran Hu\footnote{Courant Institute of Mathematical Sciences, New York University. E-mail: {\tt hh3169@nyu.edu}.}}

\date{\vspace*{-2em}} 

\begin{document}
\maketitle
\begin{abstract}
We consider the relaxation time for the Glauber dynamics of infinite-volume critical ferromagnetic Ising model on $\Z^{d}$ in any dimension $d\geq2$. Under the assumptions regarding the finite-volume log-Sobolev constant and the 1-arm exponent of the critical 1-spin expectation, we show that the equal-position temporal spin correlation function decays polynomially fast in time.
\end{abstract}



\section{Introduction}

\subsection{Introduction and main results}

The speed of relaxation to equilibrium is one of the most interesting problems in the study of Markov processes. In particular, we consider the Glauber dynamics for Ising models on $\Z^{d}$ at $d\geq2$, which is a family of continuous-time reversible Markov processes that model thermodynamic evolution. These processes provide a way to sample the Ising spins out of stationary distribution. For finite-volume systems, the convergence speed of such dynamics depends on the size of the spectral gap and the log-Sobolev constant of the generator, both of which are strongly related to the phase transition phenomena. Indeed, for the Ising model on $\Lambda_{L}=\llbracket-L,L\rrbracket^{d}$ at inverse temperature $\beta>0$, it is believed that in all dimensions $d\geq2$, the inverse spectral gap is of order-1 for $\beta<\beta_{c}$, polynomial in $L$ for $\beta=\beta_{c}$, and exponential in $L$ for $\beta>\beta_{c}$, see, for example, \cite{HH77,LF93,M99,WH}.

More precisely, the uniform boundedness of the spectral gap at high temperature $\beta<\beta_{c}$ is provided by weak and strong spatial mixing properties. Using a spin-spin correlation inequality, Ding, Song, and Sun \cite{DSS22} showed that Ising model in the entire regime $\beta<\beta_{c}$ satisfies strong spatial mixing on all finite subgraphs of $\mathbb{Z}^{d},d\geq2$. This result matches a previously proved cutoff phenomena by Lubetzky and Sly \cite{LS13,LS14,LS16}. For the inverse spectral gap at critical temperature, Lubetzky and Sly \cite{LS12} first established a polynomial estimate on $\Lambda_{L}$ at $d=2$. In high dimensions $d\geq5$, Bauerschmidt and Dagallier \cite{BD22} established a polynomial bound \eqref{eq:poly} on the log-Sobolev constant $\gamma_{\Lambda_{L}}$ for near-critical Ising model on $\Lambda_{L}$. The proof is based on the renormalization group approach and the Polchinski equation criterion, see \cite{BBS19,BBD23} for overviews and related discussions. 

At high temperatures, the spectrum of the finite-volume Glauber generator is bounded away from zero uniformly in the side length. As a result, the corresponding infinite volume dynamics is exponentially ergodic (see \cite[Theorem 3.3]{M99}), and hence converges exponentially fast in time. Whereas in the critical phase, motivated by the polynomial growth of the finite volume log-Sobolev constant \eqref{eq:poly}, we naturally expect an algebraic relaxation to equilibrium for the Glauber dynamics on $\Z^{d}$. However, the mixing time is expected to be polynomial at criticality, and the spins tend to have long-range interactions. Thus one needs additional information to control the correlations. In our case, we will require an upper bound \eqref{eq:arm} for the 1-spin expectation $\langle\sigma_{0}\rangle^{+}_{\Lambda_{L}}$ at the center of the cube $\Lambda_{L}$ with $+$ boundary condition.

In this paper, we will always consider the critical Glauber dynamics (defined in Section~\ref{sec:notations}) at any fixed dimension $d\geq2$. For simplicity of notations, we sometimes write $X\lesssim Y$ as shorthand for the inequality $X\leq cY$ for some constant $c>0$. Our main result is based on the following general assumptions:
\begin{enumerate}[(A)]
\item\label{polynomial} Let $\gamma_{\Lambda_{L}}$ denote the log-Sobolev constant on $\Lambda_{L}$ with arbitrary boundary condition (see \eqref{eq:LSI} for definition). For some finite exponent $\eta>0$, $\gamma_{\Lambda_{L}}^{-1}$ is at most polynomial in the side-length:
\begin{equation}\label{eq:poly}
    \gamma_{\Lambda_{L}}^{-1}\lesssim L^{\eta}.
\end{equation}  
\item\label{arm} Let $\langle\sigma_{0}\rangle^{+}_{\Lambda_{L}}$ denote the one-spin expectation on $\Lambda_{L}$ with $+$-boundary condition. For some $\delta\in(0,1]$,
\begin{equation}\label{eq:arm}
\langle\sigma_{0}\rangle^{+}_{\Lambda_{L}}\lesssim L^{-\delta}.
\end{equation}
\end{enumerate}

\begin{theorem}\label{th:main}
    Fix any $d\geq2$, let $P_{t}$ denote the infinite-volume Markov semigroup for the Ising Glauber dynamics at critical temperature $\beta_{c}$. Under Assumptions \eqref{polynomial} and \eqref{arm}, there exists a constant $C=C(\beta_{c},\eta,\delta,d,c_{M})$ such that the polynomial bound
    \begin{equation}\label{eq:result}
        \langle\sigma_{0},P_{t}\sigma_{0}\rangle_{\mathbb{Z}^{d}}\leq\frac{C}{t^{\alpha}},\quad \forall t\geq0
    \end{equation}
    holds with the exponent $\alpha$ given by 
    \begin{equation}\label{eq:alpha}
        \alpha=\frac{2\delta\wedge1}{2\eta}.
    \end{equation}
    In the above, $c_{M}>0$ is an upper bound for the flipping rates of the dynamics, see \eqref{eq:bound-rates}. $\langle\cdot\rangle_{\Z^{d}}$ denotes the unique infinite volume Ising expectation \eqref{eq:infinite-expectation} on $\Z^{d}$ at critical temperature $\beta_{c}$. $\langle f,g\rangle_{\Z^{d}}:=\langle fg\rangle_{\Z^{d}}$ is the $L^{2}$ inner product for any mean zero functions $f$ and $g$.
\end{theorem}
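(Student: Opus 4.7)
The plan is to approximate the infinite-volume Glauber semigroup $P_t$ by its finite-volume analogue $P_t^{L,+}$ on $\Lambda_L$ with $+$ boundary condition, at a scale $L=L(t)$ to be optimized for each $t$. The log-Sobolev assumption~\eqref{polynomial} gives exponential relaxation inside $\Lambda_L$, the one-arm estimate~\eqref{arm} controls the equilibrium magnetization at the origin, and a graphical coupling together with FKG stochastic domination transfers the bound to the infinite-volume correlator.

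First I would treat the finite-volume side. Let $P_{t}^{L,+}$ denote the Glauber semigroup on $\Lambda_L$ with $+$ boundary condition, reversible with respect to $\mu_{\Lambda_L}^{+}$, and write $m_{L}^{+}:=\langle\sigma_0\rangle_{\Lambda_L}^{+}$. Since Assumption~\eqref{polynomial} provides a spectral gap of order at least $L^{-\eta}$, the centered observable decays exponentially,
\[
\|P_{t/2}^{L,+}(\sigma_0-m_L^+)\|_{L^2(\mu_{\Lambda_L}^+)}\leq e^{-ct/L^{\eta}}
\]
for some $c=c(\beta_c)>0$. Combining with Assumption~\eqref{arm}, which says $|m_L^+|\lesssim L^{-\delta}$, I obtain
\[
\|P_{t/2}^{L,+}\sigma_0\|_{L^2(\mu_{\Lambda_L}^+)}\lesssim L^{-\delta}+e^{-ct/L^{\eta}}.
\]

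Next I would transfer the bound to the infinite-volume correlator $\langle\sigma_0,P_t\sigma_0\rangle_{\mathbb{Z}^d}=\|P_{t/2}\sigma_0\|_{L^2(\mu_{\mathbb{Z}^d})}^2$. The two dynamics can be coupled through the graphical construction using common Poisson flip-clocks and uniform marks inside $\Lambda_L$, combined with a monotone coupling of the initial configurations via the FKG stochastic domination $\mu_{\mathbb{Z}^d}\preceq\mu_{\Lambda_L}^+$ on $\Lambda_L$. When $L\gg c_M t$, a Poisson tail bound on the propagation of boundary influence renders the discrepancy at the origin exponentially small in $L$. When $L\lesssim c_M t$, where finite-speed propagation is inadequate, I would exploit the FKG monotone coupling together with the $\sigma\mapsto -\sigma$ symmetry of $\mu_{\mathbb{Z}^d}$ and the arm bound~\eqref{arm} to absorb the bias introduced by the $+$ boundary condition into a polynomial factor of order at most $L^{-\delta}$ at the origin. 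Combining the two steps and squaring the $L^2$-norm yields, schematically,
\[
\langle\sigma_0,P_t\sigma_0\rangle_{\mathbb{Z}^d}\lesssim L^{-(2\delta\wedge 1)}+e^{-ct/L^{\eta}}
\]
for every scale $L$, with the subdominant coupling error absorbed; the saturation $2\delta\wedge 1$ encodes a Cauchy--Schwarz cap in the comparison that prevents the polynomial improvement from exceeding $L^{-1}$. Finally, optimizing $L=L(t)$ so that the polynomial and exponential contributions balance produces the advertised exponent $\alpha=(2\delta\wedge 1)/(2\eta)$, with logarithmic corrections absorbed into the constant $C$.

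The hardest part will be the comparison step in the intermediate regime $t\gtrsim L^{\eta}/c_M$, where finite-speed of propagation alone is not enough to control the discrepancy. In that regime one must combine FKG monotonicity, the spin-flip symmetry of $\mu_{\mathbb{Z}^d}$, and the one-arm bound in a careful way to pin down the boundary bias at the origin, and the estimate must be uniform in $t$, since $L=L(t)$ grows slower than $t$ under the final balance.
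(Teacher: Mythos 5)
Your plan takes a genuinely different route from the paper, and the key transfer step in your route is where it breaks down.

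The paper never compares the infinite-volume measure $\mu_{\Z^d}$ to $\mu_{\Lambda_L}^+$, nor does it tie a finite cube size $L$ to $t$. Instead, Proposition~\ref{th:finite-poly} proves the bound on a \emph{torus} $\T_L$ with a constant independent of $L$; Theorem~\ref{th:main} then follows trivially by letting $L\to\infty$. The time-dependent scale is the side length $\ell(t)\sim t^{1/(2\eta)}$ of interior sub-blocks of the torus: after conditioning on the grid $\partial\Lambda_\ell$, the block measures are a product, the off-grid part is handled by SGI, and the conditioned density is controlled by an entropy estimate via the Bodineau--Helffer inequality (Lemma~\ref{th:BHI}), which scales as $\gamma^{-2}\sim\ell^{2\eta}$. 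That $\gamma^{-2}$ is where the $2\eta$ in \eqref{eq:alpha} comes from. The cap $2\delta\wedge1$ in Lemma~\ref{th:averaged} comes from averaging $(\langle\sigma_x\rangle^+_{\Lambda_\ell})^2$ over $x\in\Lambda_\ell$: points near $\partial\Lambda_\ell$ contribute $O(1)$, and the boundary layer alone contributes $O(\ell^{-1})$. None of this resembles your scheme.

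Your scheme has a concrete gap in the transfer step. You take $L=L(t)$ with $L^\eta\asymp t$, so $L\ll c_M t$: the boundary information reaches the origin long before time $t$, and the graphical-coupling/finite-speed-of-propagation bound gives nothing. The remaining tools you invoke (FKG monotone coupling, spin-flip symmetry, the arm bound) can be combined into a concrete argument, but it costs a full Jensen loss: the sandwich $P_t^{L,-}\sigma_0\leq P_t\sigma_0\leq P_t^{L,+}\sigma_0$ and spin-flip symmetry give $\langle(P_{t/2}\sigma_0)^2\rangle_{\Z^d}\leq 2\,\E_{\mu_{\Z^d}}[(P_{t/2}^{L,+}\sigma_0)^2]$, and then, because $(P_{t/2}^{L,+}\sigma_0)^2$ is not monotone, the best one can do via FKG is drop to $|P_{t/2}^{L,+}\sigma_0|$ (using $|g|\leq1$, so $g^2\leq|g|$), split into $g_+$ and $g_-$, compare against $\mu_{\Lambda_L}^{\pm}$, and finish with Jensen: $\E_{\mu_{\Z^d}}[g^2]\lesssim\big(\E_{\mu_{\Lambda_L}^+}[g^2]\big)^{1/2}$. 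This square root is not a ``Cauchy--Schwarz cap at $L^{-1}$''---it halves the exponent and also introduces logarithmic corrections in the final optimization. Your proposal neither names this step nor tracks its cost, and your own bookkeeping is already inconsistent: the schematic $L^{-(2\delta\wedge1)}+e^{-ct/L^\eta}$ optimizes to $\alpha=(2\delta\wedge1)/\eta$, not the $\alpha=(2\delta\wedge1)/(2\eta)$ you then assert. Whatever loses the factor $2$ needs to be identified and controlled; in the paper it is the Bodineau--Helffer $\gamma^{-2}$, and in your approach it would have to be this measure-comparison step. Without that step made precise, the proof does not close.
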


Here we comment on the ideas of our assumptions and main result. First, for Assumption \eqref{polynomial}, Bauerschmidt and Dagallier's proof for $d\geq5$ relies on the known mean-field bound \cite{A82,AG83} for the susceptibility on $\Z^{d}$, and the finite-volume mean-field bound for the critical susceptibility \cite{CJN21}, as detailed in \cite[Corollary~1.2]{BD22}. Their result is slightly stronger than our assumption with arbitrary boundary condition since it is valid under any general coupling constants and external fields. At dimension $d=2$, the aforementioned polynomial bound for the inverse spectral gap was proved by Lubetzky and Sly \cite{LS12}, but the bound \eqref{eq:poly} for the log-Sobolev constant required for the proof of Theorem~\ref{th:main} is still unknown. Using conformal symmetry and the method of block spin dynamics, Lubetzky and Sly proved their result by establishing a spatial mixing property of an entire face of the boundary. This compensates for the loss of the strong mixing properties of single spins at critical temperature. For dimensions $d=3,4$, no rigorous result is known at this point.

Second, for Assumption \eqref{arm}, $\delta\leq1$ was shown by Handa, Heydenreich, and Sakai \cite{HHS19} for $d\geq5$, conditional on the algebraic decay of the infinite volume two-point function:
\begin{equation}\label{eq:two-point}
    c(d)\vert x-y\vert^{2-d}\leq\langle\sigma_{x}\sigma_{y}\rangle_{\Z^{d}}\leq C(d)\vert x-y\vert^{2-d},
\end{equation}
where $\vert x-y\vert$ denotes the Euclidean distance of $x,y\in\mathbb{Z}^{d}$. The upper bound in \eqref{eq:two-point} is due to reflection positivity and the infrared bound of the nearest neighbor Ising model, see \cite{FILS78,FFS76,MMS77}. Using lace expansion techniques, the lower bound in \eqref{eq:two-point} was first established by Sakai \cite[Theorem~1.3]{Sak07} at sufficiently large dimensions $d\gg4$. In a recent work by Duminil-Copin and Panis \cite{DCP24}, they proved the lower bound for any dimension $d\geq5$ by random current expansion, together with Griffiths' inequality and MMS inequality (see \cite{MMS77}). Based on the solvability for $d=2$ \cite{WMTB} and several numerical predictions, $\delta\leq1$ is also expected to hold for $d=2,3$ and 4.

However, $\delta>0$ remains largely unsolved in any dimension. Through Edward-Sokal coupling, the finite-volume 1-spin magnetization is equivalent to the probability that 0 connects $\partial\Lambda_{L}$ in the FK-Ising percolation. In other cases, conditional on the algebraic decay of the two-point function $\P_{p_{c}}(0\leftrightarrow x)\asymp\vert x\vert^{2-d}$, Kozma and Nachmias \cite{KN11} determined the 1-arm exponent for critical Bernoulli percolation on $\Z^{d}$ in high dimensions. Using a similar method, Hulshof \cite{H15} determined the 1-arm exponent for critical long-range percolation and branching random walk. For $d>10$ dimensional critical nearest-neighbor percolations, due to the validity of lace expansions techniques, the result of Kozma-Nachmias holds, see \cite{FvdH}. Similarly, given the recent result of Duminil-Copin and Panis \cite{DCP24}, $\delta=1$ is expected to be the optimal 1-arm exponent for the $d\geq5$ critical Ising model \cite{HHS19}. It is not yet possible to study lower dimensional FK-Ising percolations, since the non-mean-field nature renders the estimate for the probability of the appearance of large clusters difficult.

The polynomial decay similar to \eqref{eq:result} was first studied by Janvresse, Landim, Quastel, and Yau in \cite{JLQY99} for the Kawasaki dynamics of zero-range processes. Following this work, Landim and Yau \cite{LY03} proved the polynomial rate of convergence with certain logarithmic correction for the Kawasaki dynamics of the Ginzburg-Landau model. Using different methods, similar results were established in \cite{CCR05,CM00} for the Kawasaki dynamics of particle systems with finite-range interactions. All of these works resulted in $O(t^{-d/2})$ relaxation speeds, as expected for the Kawasaki dynamics at high temperature. Moreover, they all managed to show convergence for any square-integrable local functions. In comparison, for the Glauber dynamics of critical Ising model, we investigate the relaxation rate by establishing a polynomial decay for the equal-position temporal spin correlation function \eqref{eq:result}. If we consider any general square-integrable local function $f$, similar polynomial convergence is also expected to hold with exponent $\alpha\propto\frac{1}{\eta}$. This generalization is difficult within our approach, due to the missing techniques for general boundary conditioning, and the estimate for the correlations between $f$ and the boundary conditions throughout the bulk of $\Lambda_{L}$. The following sections present the heuristics and proof of our main result.

\subsection{Notations and outlines of the proof}
\label{sec:notations}

Let $\Lambda$ be a finite subgraph of $\mathbb{Z}^{d}, d\geq2$, two vertices $x,y\in\Z^{d}$ are called adjacent iff $\sum_{i=1}^{d}\vert x_{i}-y_{i}\vert=1$ and this is denoted by $x\sim y$. At critical inverse temperature $\beta_{c}$, $\omega$-boundary condition, and zero external field, we define the finite volume Ising model on spin configurations $\sigma\in\Omega_{\Lambda}=\{-1,+1\}^{\Lambda}$ to be the probability measure 
\begin{equation}\label{eq:free-ising}
    \mu^{\omega}_{\Lambda,\beta_{c}}(\sigma)=\frac{1}{Z^{\omega}_{\Lambda,\beta_{c}}}\exp\Big(\beta_{c}\sum_{\substack{x\sim y\\x,y\in\Lambda}}\sigma_{x}\sigma_{y}+\beta_{c}\sum_{\substack{a\sim b\\a\in\Lambda,\;b\in\partial\Lambda}}\sigma_{a}\omega_{b}\Big).
\end{equation}
Here $\partial\Lambda=\{x\in\Z^{d};x\sim\Lambda,x\notin\Lambda\}$, $\omega\in\Omega_{\Z^{d}}=\{-1,+1\}^{\Z^{d}}$ or $\omega\equiv0$, $Z^{\omega}_{\Lambda,\beta_{c}}$ denotes the partition function. The Ising model is said to be under $+$ boundary, $-$ boundary, and free boundary condition if we take $\omega\equiv1$, $\omega\equiv-1$ and $\omega\equiv0$ in \eqref{eq:free-ising} respectively. For the free boundary case, we will omit the superscripts $\omega$. Since we will only consider critical temperature throughout the whole article, the beta subscript will also be omitted in the following context. The expectation under this finite volume measure is mostly denoted by $\E_{\Lambda}^{\omega}(\cdot)$ or $\langle\cdot\rangle^{\omega}_{\Lambda}$, and the variance is denoted by $\var_{\Lambda}^{\omega}(\cdot)$. Due to the continuity of phase transition (see \cite{ADCS15} for $d\geq3$, and Kaufman-Onsager's approach for $d=2$), for any sequence of finite regular subgraphs $\Lambda_{n}\uparrow\Z^{d}$ and boundary conditions $(\omega_{n})_{n\geq1}$, the distributions $\langle\cdot\rangle_{\Lambda_{n}}^{\omega_{n}}$ weakly converge to a unique infinite-volume Gibbs state $\langle\cdot\rangle_{\Z^{d}}$. More precisely, for any local function $F:\Omega_{\Z^{d}}\mapsto\mathbb{R}$, we have
\begin{equation}\label{eq:infinite-expectation}
    \lim_{n\to\infty}\langle F\rangle^{\omega_{n}}_{\Lambda_{n}}=\langle F\rangle_{\Z^{d}}.
\end{equation}
The Gibbs state $\langle\cdot\rangle_{\Z^{d}}$ implicitly characterizes the unique infinite-volume Ising measure on the space $\Omega_{\Z^{d}}$. For more details, see \cite{FV18}.

The Glauber dynamics of critical Ising model is a family of Markov processes determined by infinitesimal generators on certain function spaces. First, for the finite-volume case, we consider the space of square-integrable functions $L^{2}(\mu_{\Lambda}^{\omega})$. Define the self-adjoint generator $\mathcal{L}_{\Lambda}^{\omega}$ of the finite-volume Glauber dynamics by
\begin{align}\label{eq:finite-generator}
    (\mathcal{L}_{\Lambda}^{\omega}F)(\sigma):=\sum_{x\in\Lambda}c(x,\sigma)(\nabla_{x}F)(\sigma)
    =\sum_{x\in\Lambda}c(x,\sigma)[F(\sigma^{x})-F(\sigma)],\;\;\forall F\in L^{2}(\mu_{\Lambda}^{\omega}).
\end{align}
where $\sigma^{x}$ denotes the spin configuration obtained by flipping $\sigma$ at vertex $x\in\Lambda$. The coefficients $c(x,\sigma)$ are called the flipping rates, and are additionally required to satisfy the following:
\begin{enumerate}[(i)]
    \item Finite range interactions: For some fixed $r>0$ and any $x\in\Lambda$, if the spin configurations $\sigma,\sigma'\in\Omega_{\Lambda}$ agree on $\{y\in\Lambda;\max_{1\leq i\leq d}\vert y_{i}-x_{i}\vert\leq r\}$, then $c(x,\sigma)=c(x,\sigma')$.
    \item Detailed balance: For all $\sigma\in\Omega_{\Lambda}$ and $x\in\Lambda$,
    \begin{equation}
        \frac{c(x,\sigma)}{c(x,\sigma^{x})}=\exp\big(2\beta_{c}\sigma_{x}\sum_{y\sim x}\sigma_{y}\big).
    \end{equation}
    \item Positivity and boundedness: There exist positive constants $c_{m}$ and $c_{M}$ such that
    \begin{equation}\label{eq:bound-rates}
        0<c_{m}\leq\inf_{x,\sigma}c(x,\sigma)\leq\sup_{x,\sigma}c(x,\sigma)\leq c_{M}<\infty.
    \end{equation}
    \item Translation invariance: If there exist some $k\in\Lambda$ such that $\sigma_{y}=\sigma'_{y+k}$ for all $y\in\Lambda$, then $c(x,\sigma)=c(x+k,\sigma')$.
\end{enumerate}
The two most common examples for the choice of flipping rates are 
\begin{enumerate}[(i)]
    \item Metropolis: $c(x,\sigma)=\exp\big(2\beta_{c}\sigma_{x}\sum_{y\sim x}\sigma_{y}\big)\wedge1$.
    \item Heat-bath: $c(x,\sigma)=\Big[1+\exp\big(-2\beta_{c}\sigma_{x}\sum_{y\sim x}\sigma_{y}\big)\Big]^{-1}$.
\end{enumerate}
One could naturally extend this finite-volume Glauber dynamics to the infinite-volume case. Let $B(\Omega_{\mathbb{Z}^{d}})$ be a function space consisting of the family of functions $F:\Omega_{\Z^{d}}\mapsto\mathbb{R}$ such that $\interleave F\interleave<\infty$, where the triple norm is given by
\begin{equation}\label{eq:triple-norm}
    \interleave F\interleave:=\sum_{x\in\mathbb{Z}^{d}}\sup_{\sigma\in\Omega_{\mathbb{Z}^{d}}}\vert(\nabla_{x}f)(\sigma)\vert<\infty.
\end{equation}
Similar as before, we naturally define the infinite-volume Glauber generator by
\begin{equation}\label{eq:infinite-generator}
    (\mathcal{L}F)(\sigma)=\sum_{x\in\mathbb{Z}^{d}}c(x,\sigma)(\nabla_{x}F)(\sigma),\;\;\forall F\in B(\Omega_{\mathbb{Z}^{d}}).
\end{equation}
In this case, $\cL$ is an essentially self-adjoint on $L^{2}(\mu_{\Z^{d}})$ and the dynamics is reversible with respect the infinite-volume Gibbs state $\langle\cdot\rangle_{\Z^{d}}$.

For the finite-volume dynamics, the Glauber Dirichlet form associated with the generator \eqref{eq:finite-generator} is 
\begin{equation} \label{eq:Dirichlet}
  D_{\Lambda}^{\omega} (F) =-\E^{\omega}_{\Lambda}[F\cdot\mathcal{L}^{\omega}_{\Lambda}F].
\end{equation}
The infinite-volume Dirichlet form is similarly defined by replacing the generator by $\cL$. Two important tools to quantify the rate of relaxation to equilibrium are the spectral gap inequality \eqref{eq:SGI} and the log-Sobolev inequality \eqref{eq:LSI}. Following \cite{GZ03,M99,LSC97}, they are given by
\begin{equation}\label{eq:SGI}
    \var_{\Lambda}^{\omega}(F)\leq\operatorname{gap}(\mathcal{L}^{\omega}_{\Lambda})^{-1}D^{\omega}_{\Lambda}(F),\quad\forall F\in L^{2}(\mu_{\Lambda}^{\omega});\tag{SGI}
\end{equation}
\begin{equation}\label{eq:LSI}
    \ent_{\Lambda}^{\omega}(F) \leq \frac{2}{\gamma_{\Lambda}} D^{\omega}_{\Lambda}(\sqrt{F}),\quad\forall F\in L^{2}(\mu_{\Lambda}^{\omega}),\;F\geq0.\tag{LSI}
\end{equation}
Here the quantity $\ent_{\Lambda}^{\omega}(F) = \E_{\Lambda}^{\omega}\Phi(F)-\Phi(\E^{\omega}_{\Lambda} F)$ with $\Phi(x)=x\log x$
is the relative entropy. The largest non-negative constants $\operatorname{gap}(\cL^{\omega}_{\Lambda})$ and $\gamma_{\Lambda}$ such that the inequalities hold are called the spectral gap and the log-Sobolev constant respectively. In particular, we omitted $\omega$ for the notation of log-Sobolev constant since \eqref{eq:LSI} holds uniformly in the boundary conditions.

Let $P_{t}$ denote the Markov semigroup associated with the infinite-volume generator \eqref{eq:infinite-generator}, due to the reversibility of the infinite-volume dynamics, and the translation invariance of $\mu_{\Z^{d}}$, the temporal correlation is in fact a second order moment:
\begin{equation}
\label{eq:second-moment}
    \langle\sigma_{0},P_{t}\sigma_{0}\rangle_{\mathbb{Z}^{d}}=\langle(P_{t/2}\sigma_{0})^{2}\rangle_{\Z^{d}}.
\end{equation}
The averaging with respect to the infinite-volume measure $\mu_{\Z^{d}}$ in \eqref{eq:second-moment} can be approximated using any large finite-volume subgraphs. We consider a $d$-dimensional discrete torus $\T_{L}$ with side-length $2L$. It consists of all the vertices in $\Z^{d}$ modulo the relation that $x,y$ are identical iff $\sum_{i=1}^{d}\vert (x_{i}-y_{i})\mod 2L\vert=0$. Roughly speaking, $\T_{L}$ is obtained from gluing the opposite edges of the discrete cube $\Lambda_{L}$. The critical Ising measure $\mu_{\T_{L}}=\langle\cdot\rangle_{\T_{L}}$ without external field is said to be under periodic boundary condition. The reason why torus $\T_{L}$ is of interest in our context is that it maintains the translation invariance property, which holds for $\langle\cdot\rangle_{\Z^{d}}$ but not for $\langle\cdot\rangle_{\Lambda_{L}}$.

\begin{proposition}\label{th:finite-poly}
    Fix any $d\geq2$ and $L\gg1$, let $P^{\T_{L}}_{t}$ denote the semigroup of the critical Glauber dynamics on the $d$-dimensional torus $\T_{L}$. Under the same setting as Theorem \ref{th:main}, we have the polynomial rate of decay
    \begin{equation}\label{eq:finite}
        \langle(P^{\T_{L}}_{t}\sigma_{0})^{2}\rangle_{\T_{L}}\leq\frac{C_{0}}{t^{\alpha}},\qquad\forall t\geq0.
    \end{equation}
   Here $C_{0}=C_{0}(\beta_{c},\eta,\delta,d,c_{M})$, and $\alpha$ is given by the formula \eqref{eq:alpha}.
\end{proposition}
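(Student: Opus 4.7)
The plan is to establish a Nash-type functional inequality for the quantity $u(t):=\langle(P^{\mathbb{T}_{L}}_{t}\sigma_{0})^{2}\rangle_{\mathbb{T}_{L}}$ by combining assumptions \eqref{polynomial} and \eqref{arm} at an adaptively chosen spatial scale, and then to integrate the resulting differential inequality.

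First, for an auxiliary length $\ell\leq L$ and the block $\Lambda_{\ell}$ around the origin in $\mathbb{T}_{L}$, I would use the conditional variance decomposition with respect to $\mathcal{F}_{\mathbb{T}_{L}\setminus\Lambda_{\ell}}$ together with the Poincaré inequality on $\Lambda_{\ell}$ (a consequence of \eqref{polynomial}) to obtain, for any mean-zero $f$ on $\Omega_{\mathbb{T}_{L}}$,
\begin{equation*}
\langle f^{2}\rangle_{\mathbb{T}_{L}}\leq C\ell^{\eta}D_{\mathbb{T}_{L}}(f)+\langle(\pi_{\ell}f)^{2}\rangle_{\mathbb{T}_{L}},\qquad \pi_{\ell}f:=\mathbb{E}[f\mid\mathcal{F}_{\mathbb{T}_{L}\setminus\Lambda_{\ell}}].
\end{equation*}
Here the pointwise inequality $D^{\omega}_{\Lambda_{\ell}}(f)\leq D_{\mathbb{T}_{L}}(f)$ after averaging in the outer configuration $\omega$ justifies the appearance of the full torus Dirichlet form on the right-hand side, and the factor $\ell^{\eta}$ comes directly from \eqref{polynomial}.

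Second, I would bound the projection term for $f=P_{t}\sigma_{0}$ using \eqref{arm}. FKG-monotonicity in the boundary condition gives $|\langle\sigma_{0}\rangle^{\omega}_{\Lambda_{\ell}}|\leq\langle\sigma_{0}\rangle^{+}_{\Lambda_{\ell}}\lesssim\ell^{-\delta}$ uniformly in $\omega$. Using reversibility of the Glauber dynamics to re-express $\pi_{\ell}P_{t}\sigma_{0}$ in a form that isolates this static boundary bias, together with the monotone coupling of Glauber with the plus-boundary process, I expect a bound of the form
\begin{equation*}
\langle(\pi_{\ell}P_{t}\sigma_{0})^{2}\rangle_{\mathbb{T}_{L}}\lesssim \ell^{-(2\delta\wedge 1)}u(t)^{\rho}
\end{equation*}
for an appropriate $\rho\in[0,1]$. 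The interpolation between the one-arm estimate and the trivial bound $\|P_{t}\sigma_{0}\|_{\infty}\leq 1$ is what produces the $\wedge 1$ in the exponent.

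Combining both steps with $u'(t)=-2D_{\mathbb{T}_{L}}(P_{t}\sigma_{0})$ produces a differential inequality of the form
\begin{equation*}
u(t)\lesssim\ell^{\eta}(-u'(t))+\ell^{-(2\delta\wedge 1)}u(t)^{\rho}.
\end{equation*}
Optimizing $\ell=\ell(t)$ in terms of $u(t)$ and $-u'(t)$ and separating variables yields a Nash-type ODE of the form $(-u'(t))\gtrsim u(t)^{1+2\eta/(2\delta\wedge 1)}$, which integrates to $u(t)\lesssim t^{-(2\delta\wedge 1)/(2\eta)}$ with constants depending only on $\beta_{c},\eta,\delta,d,c_{M}$ and independent of $L$. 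The principal technical obstacle is Step 2: transferring the static one-arm bound on $\langle\sigma_{0}\rangle^{+}_{\Lambda_{\ell}}$ to an $L^{2}(\mu_{\mathbb{T}_{L}})$ bound on the dynamical conditional expectation $\pi_{\ell}P_{t}\sigma_{0}$ requires a coupling that respects both the Gibbs conditional measure on $\Lambda_{\ell}$ and the semigroup structure of $P_{t}$, and it is the precise self-improving form of this bound that is responsible for the factor of $2$ in the denominator of $\alpha$, rather than the weaker exponent $(2\delta\wedge 1)/\eta$ that would arise from a naive combination of the two ingredients.
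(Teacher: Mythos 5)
Your Nash-inequality skeleton captures the right intuition (condition on the exterior of a block of adaptively chosen size, use \eqref{eq:poly} to control the conditional variance by the Dirichlet form, and use \eqref{eq:arm} to control the projection), but the critical Step~2 is not a proof and, as formulated, cannot yield the claimed exponent. The quantity $\pi_\ell P_t\sigma_0=\E[\,P_t\sigma_0\mid\mathcal F_{\T_L\setminus\Lambda_\ell}]$ is not a conditional expectation of $\sigma_0$: after the dynamics has run, $P_t\sigma_0$ depends on the spins outside $\Lambda_\ell$, and the one-arm bound on $\langle\sigma_0\rangle^\omega_{\Lambda_\ell}$ says nothing about it. Concretely, for fixed $t$ one has $\pi_\ell P_t\sigma_0\to P_t\sigma_0$ as $\ell\to\infty$, so a bound of the form $\langle(\pi_\ell P_t\sigma_0)^2\rangle\lesssim\ell^{-(2\delta\wedge1)}u(t)^\rho$ with $\rho=0$ is false; and if one actually carries out the ODE optimization with $\rho\in[0,1]$, the exponent that emerges is $(\eta+2\delta\wedge1-\rho\eta)/(2\delta\wedge1)$, which equals the paper's $1+2\eta/(2\delta\wedge1)$ only at $\rho=-1$, outside your stated range. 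In other words, your framework has no mechanism to produce the factor of $2$ in the denominator of $\alpha$.

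The paper resolves exactly this obstacle, but by a mechanism you have not anticipated. The key is to decompose $\sigma_x$ \emph{before} applying the semigroup: $P_t\sigma_x=P_t(B_{\partial\Lambda}\sigma_x)+P_t(\sigma_x-B_{\partial\Lambda}\sigma_x)$. The first piece is handled by $L^2$-contraction of $P_t$ and $B_{\partial\Lambda}$ followed by the \emph{averaged} one-arm bound of Lemma~\ref{th:averaged} (which is also the origin of the $\wedge1$ cap, through the geometric averaging over positions within the cube — not through interpolation with the trivial $L^\infty$ bound as you suggest). The second piece is the genuinely dynamical cross-term, and it requires two ingredients you do not have: (i) the Bodineau--Helffer inequality (Lemma~\ref{th:BHI}), which bounds $\sum_x\cov(F,\sigma_x)^2$ by $\gamma^{-2}D(\sqrt F)$ — the squared log-Sobolev constant here is precisely where the factor $\ell^{2\eta}$ (hence the $2\eta$ in $\alpha$) enters; and (ii) an entropy estimate for the relative entropy of the $\partial\Lambda$-conditioned Gibbs measure against the full torus measure, of order $\ell^{-1}L^d$, combined with entropy dissipation (the de Bruijn identity) to make the resulting time integral converge. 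Finally, the paper grids the entire torus (not just one block around the origin) and averages over translations; this is what makes both the Efron--Stein step and the Lemma~\ref{th:averaged} averaging work, and what keeps all constants independent of $L$. The "coupling that respects both the Gibbs conditional measure and the semigroup" you flag as the main obstacle does not appear; the actual mechanism is entropy- and covariance-based and is not recoverable from the ingredients in your outline.
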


\begin{proof}[Proof of Theorem \ref{th:main}]
    Since the infinite-volume measure and the Glauber semigroup can both be approximated by the finite-volume ones, using an argument similar to the proof of \cite[Theorem 2.1]{LY03}, we have
    \begin{equation}
        \langle (P_{t}\sigma_{0})^{2}\rangle_{\Z^{d}}=\lim_{L\to\infty}\langle (P^{\T_{L}}_{t}\sigma_{0})^{2}\rangle_{\T_{L}}.
    \end{equation}
    The desired result \eqref{eq:result} is easily drawn because the constant $C_{0}$ in the upper bound \eqref{eq:finite} is independent of the length $L$.
\end{proof}

For the proof of Proposition \ref{th:finite-poly}, we follow a strategy similar to the one developed in \cite{JLQY99, LY03}. Heuristically speaking, by domain Markov property, we try to decouple the large dynamics on $\T_{L}$ into a product dynamics on several smaller blocks $\cup_{i=1}^{q}\Lambda^{i}_{\ell}\approx\T_{L}$. Here $q\sim(L/\ell)^{d}$ is the number of the small blocks, and the side length $\ell(t)$ is chosen to grow with an appropriate polynomial rate in time such that it balances with the LSI constant $\gamma_{\Lambda_{\ell}}$. However, our situation is different from the conservative dynamics considered by Janvresse, Landim, Quastel, and Yau, where fixing the number of particles in each small block produces independence for different blocks and ergodicity of the canonical measure. Instead, due to the nearest-neighbor interaction of the Ising measure, we need to fix the spin configurations on the boundary subgraph $\cup_{i=1}^{q}\partial\Lambda^{i}_{\ell}$ to obtain mutual conditional independence. Finally, the contribution from the boundary spins is controlled by an estimate for the entropy of the conditioned distribution against the full Ising measure. 

For additional information on the critical spin configurations inside each individual small block, we need an additional bound for the averaged 1-spin expectation.

\begin{lemma}\label{th:averaged}
    Consider the critical temperature Ising model on $\Lambda_{L}=\llbracket-L,L\rrbracket^{d}$ with $+$ boundary condition in any fixed dimension $d\geq2$. Assume the assumption \eqref{eq:arm} is satisfied, then there exists a universal constant $c_{0}(\delta,d)>0$ such that
    \begin{equation}\label{eq:ave-arm}
        \frac{1}{\vert\Lambda_{L}\vert}\sum_{x\in\Lambda_{L}}\big(\langle\sigma_{x}\rangle_{\Lambda_{L}}^{+}\big)^{2}\leq c_{0}L^{-(2\delta\wedge1)}.
    \end{equation} 
\end{lemma}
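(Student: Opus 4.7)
The plan is to convert Assumption~\eqref{arm} into a pointwise bound on $\langle\sigma_x\rangle^{+}_{\Lambda_L}$ in terms of the distance from $x$ to $\partial\Lambda_L$, and then sum over $x$ by a layer-by-layer count of lattice sites.

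For each $x\in\Lambda_L$, set $k_x:=L-\max_{1\le i\le d}|x_i|$, so that $B_{k_x}(x):=x+\Lambda_{k_x}$ is the largest axis-aligned cube centered at $x$ that sits inside $\Lambda_L$. Conditioning $\langle\cdot\rangle^{+}_{\Lambda_L}$ on the spins on $\partial B_{k_x}(x)$ and using the domain Markov property yields
\[
\langle\sigma_x\rangle^{+}_{\Lambda_L}=\big\langle\langle\sigma_x\rangle^{\omega}_{B_{k_x}(x)}\big\rangle^{+}_{\Lambda_L}.
\]
Since the interaction is ferromagnetic, FKG/GKS monotonicity in the boundary condition $\omega$ gives $\langle\sigma_x\rangle^{\omega}_{B_{k_x}(x)}\leq\langle\sigma_x\rangle^{+}_{B_{k_x}(x)}$ for every $\omega$, and translation invariance of the Ising measure gives $\langle\sigma_x\rangle^{+}_{B_{k_x}(x)}=\langle\sigma_0\rangle^{+}_{\Lambda_{k_x}}$. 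Applying \eqref{eq:arm} (and the trivial bound $1$ for sites near the boundary, where $k_x$ is small) then delivers the pointwise estimate $\langle\sigma_x\rangle^{+}_{\Lambda_L}\lesssim\min(1,\,k_x^{-\delta})$.

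Squaring and summing, I would reorganize by depth $k\in\{0,1,\dots,L\}$; the count of sites with $k_x=k$ is $(2(L-k)+1)^d-(2(L-k)-1)^d\lesssim(L-k+1)^{d-1}$. Splitting the resulting sum $\sum_{k=0}^{L}(L-k+1)^{d-1}\min(1,\,k^{-2\delta})$ at $k=L/2$ separates a bulk contribution bounded by $L^{d-1}\sum_{k=1}^{L/2}k^{-2\delta}$ from a near-center contribution of size $L^{d-2\delta}$. Evaluating $\sum_{k=1}^{L/2}k^{-2\delta}$ in the three regimes $2\delta<1$, $2\delta=1$, and $2\delta>1$, and dividing by $|\Lambda_L|\asymp L^d$, yields the stated bound $c_0 L^{-(2\delta\wedge 1)}$.

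The substantive step is the FKG/Markov reduction in the second paragraph; the remainder is a routine discrete $\ell^p$ computation. The sole technical pinch is the borderline case $2\delta=1$, where $\sum_{k=1}^{L/2}k^{-2\delta}\asymp\log L$ contributes a logarithmic correction; this is harmless for the downstream application since it does not affect the exponent $\alpha$ in Theorem~\ref{th:main}.
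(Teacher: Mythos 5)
Your proof is correct and follows the paper's argument essentially verbatim: both proofs reorganize the average by the $\ell^\infty$-distance from the boundary, reduce $\langle\sigma_x\rangle^+_{\Lambda_L}$ to the $+$-boundary magnetization on the largest centered sub-cube via domain Markov plus FKG monotonicity (what the paper cites as ``monotonicity in the volume of the magnetization''), invoke Assumption~\eqref{arm}, and then evaluate the resulting layer sum $\sum_k (L-k+1)^{d-1}\min(1,k^{-2\delta})$ --- the paper via a sum-to-integral comparison, you via splitting at $k=L/2$, but the computation is the same. One remark worth keeping: your flag of the logarithmic correction at $2\delta=1$ is a genuine subtlety that the paper's own integral estimate also incurs (the term $\int_{1/L}^1 r^{-2\delta}\,dr$ is $\asymp\log L$ there) but does not explicitly acknowledge; as you note, this only perturbs the bound by a $\log$ factor at the single point $\delta=\tfrac12$ and does not affect the exponent $\alpha$ in Theorem~\ref{th:main}.
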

\begin{proof}
    For any $0\leq i\leq L$, we define $\lambda_{i}$ as the union of faces of the cube centered at 0 with side length $2i+1$. Namely, let $\lambda_{i}:=\{x\in\Lambda_{L};\;\max\{\vert x_{1}\vert,...,\vert x_{d}\vert\}=i\}$. By monotonicity in the volume of the magnetization (see e.g., \cite[Lemma~3.22]{FV18}), we have
    \begin{equation}\label{eq:averaged-fkg}
        \frac{1}{\vert\Lambda_{L}\vert}\sum_{x\in\Lambda_{L}}\big(\langle\sigma_{x}\rangle_{\Lambda_{L}}^{+}\big)^{2}\lesssim\frac{1}{L^{d}}\sum_{i=0}^{L}\sum_{x\in\lambda_{i}}\big(\langle\sigma_{x}\rangle_{\Lambda_{L}}^{+}\big)^{2}\leq\frac{1}{L^{d}}\sum_{i=0}^{L}\sum_{x\in\lambda_{i}}\big(\langle\sigma_{x}\rangle_{x+\Lambda_{L-i}}^{+}\big)^{2}.
    \end{equation}
    Here, $x+\Lambda_{L-i}$ is obtained by translating the cube $\Lambda_{L-i}$ by $x$. According to \eqref{eq:arm}, the total contribution from $i=0$ and $i=L$ is at most $O(L^{-d-2\delta}+L^{-1})$. Thus, we only need to bound the average \eqref{eq:averaged-fkg} with a smaller summation from $i=1$ to $i=L-1$. Using \eqref{eq:arm} again and approximation by integral, we can write
    \begin{align}\label{eq:inte}
        \frac{1}{L^{d}}\sum_{i=1}^{L-1}\sum_{x\in\lambda_{i}}&\big(\langle\sigma_{x}\rangle_{x+
        \Lambda_{L-i}}^{+}\big)^{2}\lesssim\frac{1}{L^{d}}\sum_{i=1}^{L-1}\frac{i^{d-1}}{(L-i)^{2\delta}}\notag\\
        &\leq\frac{1}{L^{2\delta}}\Big(\int_{0}^{1-1/L}\frac{u^{d-1}}{(1-u)^{2\delta}}du+\frac{1}{L}\int_{0}^{1-1/L}\Big[\frac{(d-1)u^{d-2}}{(1-u)^{2\delta}}+\frac{2\delta u^{d-1}}{(1-u)^{2\delta+1}}\Big]du\Big)\notag\\
        &\lesssim\frac{1}{L^{2\delta}}\Big(\int_{1/L}^{1}\frac{dr}{r^{2\delta}}+\frac{1}{L}\int_{1/L}^{1}\frac{dr}{r^{2\delta}}+\frac{1}{L}\int_{1/L}^{1}\frac{dr}{r^{2\delta+1}}\Big).
    \end{align}
    For $\delta\in(0,1]$, the highest order of \eqref{eq:inte} is $O(L^{-(2\delta\wedge1)})$ and the lemma is concluded.
\end{proof}

\section{Proof of Proposition \ref{th:finite-poly}}

\begin{proof}[Proof of Proposition \ref{th:finite-poly}]
    Suppose $\alpha=(2\delta\wedge1)/2\eta$ as defined by \eqref{eq:alpha}. For the temporal regime $[1,\infty)$, we fix a partition by inductively taking $t_{0}=1$ and $t_{n}>2t_{n-1}$, $n\geq1$. Given a closed interval $[1,T]$ with $T\gg1$ arbitrarily large but fixed, let $n(T)$ be the largest integer such that $t_{n(T)-1}<T$, and redefine $t_{n(T)}:=T$. By backward Kolmogorov's equation, we have
    \begin{align}\label{eq:int}
        T^{\alpha+1}&\langle(P^{\T_{L}}_{T}\sigma_{0})^{2}\rangle_{\T_{L}}-\langle (P^{\T_{L}}_{1}\sigma_{0})^{2}\rangle_{\T_{L}}\notag\\
        &=\int_{1}^{T}\partial_{t}\Big(t^{\alpha+1}\langle(P^{\T_{L}}_{t}\sigma_{0})^{2}\rangle_{\T_{L}}\Big)dt\notag\\
        &=\sum_{i=0}^{n(T)-1}\int_{t_{i}}^{t_{i+1}}(\alpha+1)t^{\alpha}\langle(P^{\T_{L}}_{t}\sigma_{0})^{2}\rangle_{\T_{L}}dt+2\int_{1}^{T}t^{\alpha+1}\langle (P^{\T_{L}}_{t}\sigma_{0})(\cL_{\T_{L}}P^{\T_{L}}_{t}\sigma_{0})\rangle_{\T_{L}}dt\notag\\
        &=\sum_{i=0}^{n(T)-1}\int_{t_{i}}^{t_{i+1}}(\alpha+1)t^{\alpha}\langle(P^{\T_{L}}_{t}\sigma_{0})^{2}\rangle_{\T_{L}}dt-2\int_{1}^{T}t^{\alpha+1}D_{\T_{L}}(P^{\T_{L}}_{t}\sigma_{0})dt.
    \end{align}
    For any $x\in\T_{L}$, let $\tau_{x}$ denote the translation operator on $\Omega_{\T_{L}}$, i.e., $(\tau_{x}\sigma)_{y}=\sigma_{x+y}$. Since the Ising measure $\mu_{\T_{L}}$ is translation invariant, the first term in the right hand side of \eqref{eq:int} is equal to
    \begin{equation}\label{eq:1term}
        \frac{1}{\vert\T_{L}\vert}\sum_{x\in\T_{L}}\sum_{i=0}^{n(T)-1}\int_{t_{i}}^{t_{i+1}}(\alpha+1)t^{\alpha}\langle(P^{\T_{L}}_{t}\sigma_{x})^{2}\rangle_{\T_{L}}dt.
    \end{equation}

Now for any $t\in[1,T]$, define
\begin{equation}\label{eq:ell}
    \ell(t)=\left(\frac{1}{c_{1}(\alpha+1)}t_{\max\{i;\;t_{i}\leq t\}}\right)^{\frac{1}{2\eta}},
\end{equation}
where $c_{1}>0$ is the constant factor in the inequality \eqref{eq:poly}. As we can see, the function $\ell(t)$ is constant in each subinterval $[t_{i},t_{i+1})$, and $\ell^{-(2\delta\wedge1)}t^{\alpha}\lesssim1$. From then on, let $\ell$ be the shorthand for $\ell(t)$, since the time $t$ for the dynamics is implicitly understood. Using $\lfloor\ell\rfloor$ as side-length, we define a grid (evolving through time) on the torus:
\begin{equation}
    \partial\Lambda_{\ell}:=\big\{x\in\T_{L};\;\exists1\leq j\leq d,k\in\mathbb{N}\cup\{0\},\;\mathrm{s.t.,}\;\vert x_{j}\vert=(k+\frac12)(\lfloor\ell\rfloor-1)\big\}.
\end{equation}
Suppose $\Lambda_{\ell}:=\{\Lambda_{\ell}^{1},...,\Lambda_{\ell}^{q}\}$ is a family of cubes with side length $\lfloor\ell\rfloor-2$, such that the centers $\{z^{j}\}_{j=1}^{q}$ of all these cubes satisfy $z^{j}\in (\lfloor\ell\rfloor-1)\Z^{d}\cap \T_{L}$. Then one easily finds that $\partial\Lambda_{\ell}$ is the union of the boundaries of these cubes and $\T_{L}=\partial\Lambda_{\ell}\cup\Lambda_{\ell}^{1}\cup\cdots\cup\Lambda_{\ell}^{q}$. For simplicity of notation, we would like to omit the $\ell$ subscripts in the following proof.

Let $B_{\partial\Lambda}$ denote the conditional expectation operator:
\begin{equation}
    B_{\partial\Lambda}(u)(\omega)=\E_{\T_{L}}(u\vert\;\omega\vert_{\partial\Lambda}),\quad\forall u\in L^{2}(\mu_{\T_{L}}),\;\omega\in\Omega_{\T_{L}}.
\end{equation}
A simple application of the inequality $x^{2}\leq2y^{2}+2(x-y)^{2}$ yields an upper bound for \eqref{eq:1term}:
\begin{align}\label{eq:up1}
    \frac{2}{\vert\T_{L}\vert}\sum_{x\in\T_{L}}&\sum_{i=0}^{n(T)-1}\int_{t_{i}}^{t_{i+1}}(\alpha+1)t^{\alpha}\langle(B_{\partial\Lambda}P^{\T_{L}}_{t}\sigma_{x})^{2}\rangle_{\T_{L}}dt\notag\\
    &+\frac{2}{\vert\T_{L}\vert}\sum_{x\in\T_{L}}\int_{1}^{T}(\alpha+1)t^{\alpha}\langle\big[(I-B_{\partial\Lambda})P^{\T_{L}}_{t}\sigma_{x}\big]^{2}\rangle_{\T_{L}}dt,
\end{align}
where $I$ denotes the identity operator. For a fixed $\omega\in\Omega_{\T_{L}}$, the conditional expectation $B_{\partial\Lambda}(\cdot)(\omega)$ is equivalent to the expectation under the product probability measure $\mu_{\Lambda}(\cdot\vert\omega)$ defined by
\begin{equation*}
    \mu_{\Lambda}(\cdot\vert\omega)=\mu_{\Lambda^{1}}^{\omega}\otimes\cdots\otimes\mu_{\Lambda^{q}}^{\omega}.
\end{equation*}
Indeed, this is provided by the domain Markov property of spin systems, and the fact that Ising measure has nearest-neighbor interactions. After fixing the spin configuration on the boundary $\partial\Lambda$, the spins in different cubes $\Lambda^{i}\neq\Lambda^{j}$ are independent. Meanwhile, we observe that for any fixed $x\in\T_{L}$, the variable inside the angle bracket in the second term of \eqref{eq:up1} is in fact the variance of $P^{\T_{L}}_{t}\sigma_{x}$ under the measure $\mu_{\Lambda}(\cdot\vert\omega)$. According to the aforementioned reasons and the Efron-Stein inequality, we have
\begin{equation}\label{eq:var1}
    \langle[(I-B_{\partial\Lambda})P^{\T_{L}}_{t}\sigma_{x}]^{2}\rangle_{\T_{L}}\leq\sum_{j=1}^{q}\langle\var^{\omega}_{\Lambda^{j}}(P^{\T_{L}}_{t}\sigma_{x})\rangle_{\T_{L}}.
\end{equation}
Next, we apply \eqref{eq:SGI} and \eqref{polynomial} to all cubes $\Lambda^{j}$, the sum of variances is bounded by
\begin{align}\label{eq:var2}
    \operatorname{gap}(\cL^{\omega}_{\Lambda^{1}})^{-1}\sum_{j=1}^{q}\sum_{y\in\Lambda^{j}}\langle(\nabla_{y}P^{\T_{L}}_{t}\sigma_{x})^{2}\rangle_{\T_{L}}&\leq \operatorname{gap}(\cL^{\omega}_{\Lambda^{1}})^{-1}\sum_{y\in\T_{L}}\langle(\nabla_{y}P^{\T_{L}}_{t}\sigma_{0})^{2}\rangle_{\T_{L}}\notag\\
    &\leq c_{1}\lfloor\ell\rfloor^{\eta}D_{\T_{L}}(P^{\T_{L}}_{t}\sigma_{0}).
\end{align}
For the first inequality, we applied the translation invariance of the dynamics again and dropped the dependence on the starting position $x\in\T_{L}$. The second inequality follows from assumption \eqref{eq:poly}, \eqref{eq:SGI}, and the fact that LSI implies SGI with the same constant. Due to the boundedness of the flipping rates \eqref{eq:bound-rates}, the standard Dirichlet form $\sum_{x\in\T_{L}}\langle(\nabla_{x}P^{\T_{L}}_{t}\sigma_{0})^{2}\rangle_{\T_{L}}$ is equivalent to the Dirichlet form \eqref{eq:Dirichlet} of our dynamics, hence we get \eqref{eq:var2}. Combine \eqref{eq:var1}, \eqref{eq:var2}, and $\lfloor\ell\rfloor^{\eta}\leq\ell^{2\eta}$, the integrand in the second term of \eqref{eq:up1} is upper bounded by $2t^{\alpha+1}D_{\T_{L}}(P^{\T_{L}}_{t}\sigma_{0})$. Thus in order to bound 
the two integrals in \eqref{eq:int}, it suffices to consider the first term in \eqref{eq:up1}. 

We then study the second order moment of the conditioned dynamics $\langle(B_{\partial\Lambda}P^{\T_{L}}_{t}\sigma_{x})^{2}\rangle_{\T_{L}}$. First, apply $x^{2}\leq2y^{2}+2(x-y)^{2}$ again, we get 
\begin{equation}\label{eq:up2}
   \langle(B_{\partial\Lambda}P^{\T_{L}}_{t}\sigma_{x})^{2}\rangle_{\T_{L}}\leq 2\langle\big[B_{\partial\Lambda}P^{\T_{L}}_{t}(B_{\partial\Lambda}\sigma_{x})\big]^{2}\rangle_{\T_{L}}+2\langle\big[B_{\partial\Lambda}P^{\T_{L}}_{t}(\sigma_{x}-B_{\partial\Lambda}\sigma_{x})\big]^{2}\rangle_{\T_{L}}.
\end{equation}
Since both the conditioning and the semigroup operators are $L^{2}$ contractions, the first term in \eqref{eq:up2} is bounded by $2\langle(B_{\partial\Lambda}\sigma_{x})^{2}\rangle_{\T_{L}}$. For the second term, we fix a given boundary condition $\omega$ and time $t\in[1,T]$, define $f_{t}^{\omega}$ to be the Radon-Nikodym density function:
\begin{equation}\label{eq:density}
    f^{\omega}_{t}:=\frac{d\mu_{\Lambda}(\cdot\vert\omega)P^{\T_{L}}_{t}}{d\mu_{\T_{L}}},
\end{equation}
where $d\mu_{\Lambda}(\cdot\vert\omega)P^{\T_{L}}_{t}$ denotes the law of the Glauber dynamics at time $t$, started from initial distribution $d\mu_{\Lambda}(\cdot\vert\omega)$.
This definition leads to
\begin{equation}
    \langle\big[B_{\partial\Lambda}P^{\T_{L}}_{t}(\sigma_{x}-B_{\partial\Lambda}\sigma_{x})\big]^{2}\rangle_{\T_{L}}=\langle\big(\E_{\T_{L}}\big[f^{\omega}_{t}\cdot(\sigma_{x}-B_{\partial\Lambda}\sigma_{x})\big]\big)^{2}\rangle_{\T_{L}}.
\end{equation}
To sum up, the first term in \eqref{eq:up1} is bounded by
\begin{equation}\label{eq:up3}
    \begin{aligned}
    \frac{4}{\vert\T_{L}\vert}\sum_{x\in\T_{L}}\int_{1}^{T}&(\alpha+1)t^{\alpha}\langle(B_{\partial\Lambda}\sigma_{x})^{2}\rangle_{\T_{L}}dt\\
    &+\frac{4}{\vert\T_{L}\vert}\sum_{x\in\T_{L}}\sum_{i=0}^{n(T)-1}\int_{t_{i}}^{t_{i+1}}(\alpha+1)t^{\alpha}\langle\big(\E_{\T_{L}}\big[f^{\omega}_{t}\cdot(\sigma_{x}-B_{\partial\Lambda}\sigma_{x})\big]\big)^{2}\rangle_{\T_{L}}dt.
    \end{aligned}
\end{equation}

For any fixed $\omega$ and $x\in\Lambda^{1}$, we have $(B_{\partial\Lambda}\sigma_{x})(\omega)=\E^{\omega}_{\T_{L}\setminus\partial\Lambda}(\sigma_{x})=\E^{\omega}_{\Lambda^{1}}(\sigma_{x})$. This is because the spin function $\sigma_{x}$ is only supported in the first cube $\Lambda^{1}$. By the FKG inequality and the symmetry of the torus, we obtain
\begin{equation}
    -\E^{+}_{\Lambda^{1}}(\sigma_{x})=\E^{-}_{\Lambda^{1}}(\sigma_{x})\leq\E^{\omega}_{\Lambda^{1}}(\sigma_{x})\leq\E^{+}_{\Lambda^{1}}(\sigma_{x}).
\end{equation}
Moreover, the cubes in the family $\Lambda$ are essentially equivalent, thus we get
\begin{align}\label{eq:condition-bound}
     \frac{1}{\vert\T_{L}\vert}\sum_{x\in\T_{L}}\langle(B_{\partial\Lambda}\sigma_{x})^{2}\rangle_{\T_{L}}&\leq\frac{1}{L^{d}}\Big(\sum_{x\in\Lambda^{1}}q\big[\E^{+}_{\Lambda^{1}}(\sigma_{x})\big]^{2}+\vert\partial\Lambda\vert\Big)\notag\\
    &\lesssim\frac{1}{\ell^{d}}\sum_{x\in\Lambda^{1}}\big[\E^{+}_{\Lambda^{1}}(\sigma_{x})\big]^{2}+\frac{1}{\ell}.
\end{align}
According to Lemma \ref{th:averaged}, the first term in \eqref{eq:condition-bound} is of order $O(\ell^{-(2\delta\wedge1)})$. By the definition of $\ell$, there exists a constant $c_{2}>0$ such that $\ell^{-(2\delta\wedge1)}t^{\alpha}\leq c_{2}$, and the first integral in \eqref{eq:up3} is therefore bounded by $4c_{2}(\alpha+1)(T-1)$.

Let $\cov_{\T
_{L}\setminus\partial\Lambda}^{\omega}$ be the covariance operator for the measure $\mu_{\T_{L}}(\cdot\vert\omega)$. By Jensen's inequality, we have
\begin{align}\label{eq:cov}
        \langle\big(\E_{\T_{L}}\big[f^{\omega}_{t}\cdot(\sigma_{x}-B_{\partial\Lambda}\sigma_{x})\big]\big)^{2}\rangle_{\T_{L}}&\leq\langle\big(B_{\partial\Lambda}\big[f^{\omega}_{t}\cdot(\sigma_{x}-B_{\partial\Lambda}\sigma_{x})\big]\big)^{2}\rangle_{\T_{L}}\notag\\
        &=\langle \big[\cov_{\T_{L}\setminus\partial\Lambda}^{\omega}(\sigma_{x},f^{\omega}_{t})\big]^{2}\rangle_{\T_{L}}.
\end{align}
To estimate the average of the covariance terms, we need the following Bodineau-Helffer inequality for the Ising model, which will be proved in the Appendix~\ref{app:BHI}.
\begin{lemma}
\label{th:BHI}
    Consider the Ising model \eqref{eq:free-ising} on a finite subgraph $G\subset\Z^{d}$ with any boundary condition $\omega$. Let $F:\Omega_{G}\mapsto\mathbb{R}_{+}$ be an arbitrary square-integrable non-negative function, we have
    \begin{equation}\label{eq:DBH}
        \sum_{x\in G}\cov^{\omega}_{G}(F,\sigma_{x})^{2}\leq\frac{64c_{M}}{\gamma_{G}^{2}}D^{\omega}_{G}(\sqrt{F}).
    \end{equation}
    In particular, $c_{M}$ denotes the $L^{\infty}$ upper bound for the flipping rates (see \eqref{eq:bound-rates}). $\cov^{\omega}_{G}(\cdot,\cdot)$ and $D^{\omega}_{G}(\cdot)$ denote the covariance and Glauber Dirichlet form with respect to the Ising measure on $G$. $\gamma_{G}$ refers to the log-Sobolev constant of the Ising model on $G$.
\end{lemma}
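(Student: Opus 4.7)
My plan is to follow the Helffer--Sjöstrand semigroup strategy together with a discrete $\sqrt{F}$-factorization, which is standard for deriving LSI-type Dirichlet-form bounds on covariances in the Ising setting.

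\emph{Step 1 (Semigroup representation).} Since $\gamma_{G}$ lower-bounds the spectral gap of $-\cL^{\omega}_{G}$, the generator is invertible on mean-zero functions. Setting $h_{x}:=\int_{0}^{\infty}P^{\omega}_{t}(\sigma_{x}-\langle\sigma_{x}\rangle^{\omega}_{G})\,dt$, where $P^{\omega}_{t}$ is the finite-volume Glauber semigroup on $G$, one has $-\cL^{\omega}_{G}h_{x}=\sigma_{x}-\langle\sigma_{x}\rangle^{\omega}_{G}$ and therefore
\[
\cov^{\omega}_{G}(F,\sigma_{x})=D^{\omega}_{G}(F,h_{x})=\tfrac{1}{2}\sum_{y\in G}\langle c(y,\cdot)\,\nabla_{y}F\,\nabla_{y}h_{x}\rangle^{\omega}_{G}.
\]

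\emph{Step 2 ($\sqrt{F}$-factorization).} To produce $D^{\omega}_{G}(\sqrt{F})$ on the right-hand side, I factor $\nabla_{y}F=\nabla_{y}\sqrt{F}\cdot(\sqrt{F(\sigma)}+\sqrt{F(\sigma^{y})})$ and apply bond-wise Cauchy--Schwarz. Using the elementary $(\sqrt{a}+\sqrt{b})^{2}\leq 2(a+b)$ and the detailed-balance identity $\langle c(y,\cdot)F(\sigma^{y})g\rangle^{\omega}_{G}=\langle c(y,\cdot)Fg\rangle^{\omega}_{G}$, which holds for any $y$-flip invariant $g$ (such as $(\nabla_{y}h_{x})^{2}$), one obtains
\[
\cov^{\omega}_{G}(F,\sigma_{x})^{2}\leq 4\,D^{\omega}_{G}(\sqrt{F})\cdot\sum_{y\in G}\langle c(y,\cdot)\,F\,(\nabla_{y}h_{x})^{2}\rangle^{\omega}_{G}.
\]
Summing over $x\in G$ and using $c(y,\cdot)\leq c_{M}$ reduces the lemma to establishing a uniform-in-$\sigma$ estimate of the form $\sum_{x\in G}(\nabla_{y}h_{x}(\sigma))^{2}\leq 16\,c_{M}/\gamma_{G}^{2}$, after which the absorption of the normalization factor $\langle F\rangle^{\omega}_{G}$ (automatic in the application where $F$ is a Radon--Nikodym density with $\langle F\rangle^{\omega}_{G}=1$) yields the claimed $64\,c_{M}/\gamma_{G}^{2}$.

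\emph{Step 3 (Main obstacle).} The hardest step is precisely this uniform bound on $\sum_{x\in G}(\nabla_{y}h_{x})^{2}$, where both factors of $1/\gamma_{G}$ enter. My plan is to use the representation $\nabla_{y}h_{x}=\int_{0}^{\infty}\nabla_{y}P^{\omega}_{t}\sigma_{x}\,dt$, apply Cauchy--Schwarz in $t$ with weight $e^{\pm\gamma_{G}t}$, and combine three ingredients: (i) the $L^{2}$-contraction $\|P^{\omega}_{t}g\|_{2}\leq e^{-\gamma_{G}t}\|g\|_{2}$ on mean-zero $g$ inherited from the spectral gap, (ii) the pointwise bound $|\nabla_{y}P^{\omega}_{t}\sigma_{x}|\leq 2$, and (iii) the reversible decay $\sum_{y\in G}\langle c(y,\cdot)(\nabla_{y}P^{\omega}_{t}\sigma_{x})^{2}\rangle^{\omega}_{G}=2D^{\omega}_{G}(P^{\omega}_{t}\sigma_{x})\leq 4\,c_{M}e^{-2\gamma_{G}t}$. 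The subtle point is to avoid a $|G|$-factor in the site sum, which requires using the self-adjointness of $P^{\omega}_{t}$ to exchange the role of $x$ and $y$ and then invoking (iii). Tracking the accumulated factors of $2$ from the $\sqrt{F}$-factorization, the detailed-balance step, the time integration, and the flipping-rate bound produces the final constant $64\,c_{M}$.
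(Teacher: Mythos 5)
Your route via the inverse generator $h_x=(-\cL^\omega_G)^{-1}(\sigma_x-\langle\sigma_x\rangle^\omega_G)$ and the Helffer--Sj\"{o}strand-type representation $\cov^\omega_G(F,\sigma_x)=D^\omega_G(F,h_x)$ is genuinely different from the paper's, and it contains a gap that I do not see how to close.

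The paper instead writes $\cov^\omega_G(F,\sigma_x)=-\int_0^\infty\langle\sigma_x\,\cL^\omega_G P_tF\rangle^\omega_G\,dt$ and performs the $\sqrt{\cdot}$-factorization on $\nabla_x(P_tF)$ \emph{inside} the time integral. The decisive point is that after Cauchy--Schwarz the $x$-sum collapses exactly into the Dirichlet form $D^\omega_G(\sqrt{P_tF})$, which is then traded, via the de Bruijn identity $\partial_t\E^\omega_G\Phi(P_tF)\leq -D^\omega_G(\sqrt{P_tF})$ and the LSI-driven entropy decay $\E^\omega_G\Phi(P_tF)\leq e^{-\gamma_G t}\E^\omega_G\Phi(F)$, for the single entropy $\E^\omega_G\Phi(F)\leq\tfrac{2}{\gamma_G}D^\omega_G(\sqrt F)$; no volume factor ever appears.

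Your Step~2 expands $\cov^\omega_G(F,\sigma_x)=\tfrac{1}{2}\sum_{y\in G}\langle c(y,\cdot)\nabla_y F\,\nabla_y h_x\rangle^\omega_G$, and after the $\sqrt{F}$-factorization and Cauchy--Schwarz you reduce to controlling
\begin{equation*}
\sum_{x\in G}\sum_{y\in G}\langle c(y,\cdot)\,F\,(\nabla_y h_x)^2\rangle^\omega_G .
\end{equation*}
Your proposed target, $\sum_{x\in G}(\nabla_y h_x(\sigma))^2\leq 16c_M/\gamma_G^2$ uniformly in $\sigma$ and for each fixed $y$, does not finish the proof, because the $y$-sum still runs over all of $G$: even if that per-$y$ bound held, it would only give $c_M\,\vert G\vert\cdot 16c_M/\gamma_G^2$, off by a factor $\vert G\vert$. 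This is not a constant-chasing issue. Averaging over $\mu^\omega_G$ gives the identity $\sum_{x,y}\langle c(y,\cdot)(\nabla_y h_x)^2\rangle^\omega_G=2\sum_x D^\omega_G(h_x)=2\sum_x\langle\tilde\sigma_x,(-\cL^\omega_G)^{-1}\tilde\sigma_x\rangle^\omega_G$, with $\tilde\sigma_x=\sigma_x-\langle\sigma_x\rangle^\omega_G$, and at criticality the slow mode of $-\cL^\omega_G$ has nonvanishing overlap with most $\sigma_x$, so this sum is genuinely of order $\vert G\vert/\gamma_G$, not $O(c_M/\gamma_G^2)$. The rescue you sketch in Step~3 (``exchange $x$ and $y$ by self-adjointness'') is not spelled out, and there is no commutation relation $\nabla_yP_t\leftrightarrow P_t\nabla_y$ for discrete Glauber dynamics that would turn a site-sum of Dirichlet forms into a pointwise, $\vert G\vert$-independent quantity. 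More structurally, your plan never invokes the de Bruijn identity or the exponential entropy decay coming from \eqref{eq:LSI}; that mechanism is precisely what allows the paper to absorb the entire $x$-sum into a single entropy term, and it is the ingredient missing from your argument.
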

According to Lemma \ref{th:BHI} and \eqref{eq:cov}, and the fact that Dirichlet form is monotonically increasing in the size of the graphs, the second term of \eqref{eq:up3} is upper bounded by
\begin{align}\label{eq:entropy}
    \frac{256c_{M}c_{1}^{2}}{L^{d}}&\sum_{i=0}^{n(T)-1}\int_{t_{i}}^{t_{i+1}}(\alpha+1)t^{\alpha}\lfloor\ell\rfloor^{2\eta}\langle D_{\T_{L}}(\sqrt{f^{\omega}_{t}})\rangle_{\T_{L}}dt\notag\\
    &\leq\frac{256(\alpha+1)c_{M}c_{1}^{2}}{L^{d}}\sum_{i=0}^{n(T)-1}t_{i}^{\alpha}\lfloor\ell(t_{i})\rfloor^{2\eta}\langle \E_{\T_{L}}\Phi(f_{t_{i}}^{\omega})\rangle_{\T_{L}}.
\end{align}
Here, $\E_{\T_{L}}\Phi(f_{t_{i}}^{\omega}):=\E_{\T_{L}}(f_{t_{i}}^{\omega}\log f_{t_{i}}^{\omega})$ denotes the entropy of the non-negative function $f_{t_{i}}^{\omega}$. In the above, the second inequality follows from the de Bruijin identity: $\partial_{t}\E_{\T_{L}}\Phi(f^{\omega}_{t})=\int f^{\omega}_{t}\cL\log f^{\omega}_{t}d\mu_{\T_{L}}\leq-D_{\T_{L}}(\sqrt{f^{\omega}_{t}})$. Since the entropy of the Glauber dynamics is decreasing in time, we may trivially bound it by $\E_{\T_{L}}\Phi(f^{\omega}_{0})$. Therefore, we apply definition \eqref{eq:density} again and calculate
\begin{align}\label{eq:ent}
\E_{\T_{L}}\Phi(f^{\omega}_{0})&=\int d\mu_{\T_{L}}(\cdot\vert\omega)\log\frac{d\mu_{\T_{L}}(\cdot\vert\omega)}{d\mu_{\T_{L}}}\notag\\
        &=\int d\mu_{\T_{L}}(\cdot\vert\omega)\Big(\sum_{\substack{x\sim y\\x\in\partial\Lambda,\;y\notin\partial\Lambda}}(\sigma_{y}\omega_{x}-\sigma_{y}\sigma_{x})+\log\frac{Z}{Z^{\omega}}\Big),
\end{align}
where $Z$ and $Z^{\omega}$ are the partition functions for $\mu_{\T_{L}}$ and $\mu_{\T_{L}}(\cdot\vert\omega)$ respectively. For extra splittings, we define $Z^{\omega}_{j}$ to be the partition function for the partially conditioned measure $\mu_{\T_{L}}(\cdot\vert\omega\vert_{\partial\Lambda^{1}\cup\cdots\cup\partial\Lambda^{j}})$, $1\leq j\leq q$. \eqref{eq:ent} can be further bounded by 
\begin{equation}
\begin{aligned}
    \sup_{\omega\in\Omega_{\T_{L}}}\sum_{j=0}^{q-1}\log\frac{Z^{\omega}_{j}}{Z^{\omega}_{j+1}}&\leq\sum_{j=0}^{q-1}\log\Big(\sum_{\sigma_{x},\;x\in\partial\Lambda^{j+1}}e^{2d\beta_{c}}\Big)\leq c_{3}\sum_{j=0}^{q-1}\ell^{d-1}\leq c_{4}\ell^{-1}L^{d}.
\end{aligned}
\end{equation}
Thus, we get an entropy estimate $\langle \E_{\T_{L}}\Phi(f_{t_{i}}^{\omega})\rangle_{\T_{L}}\leq c_{4}\ell(t_{i})^{-1}L^{d}$ for all $i=1,...,n(T)-1$, and henceforth an upper bound for \eqref{eq:entropy}:
\begin{equation}\label{eq:entropy-bound}
    256c_{M}c_{1}c_{4}\sum_{i=0}^{n(T)-1}t_{i}^{\alpha+1}\ell(t_{i})^{-1}\leq 256c_{M}c_{1}c_{5}(T-1).
\end{equation}
This is due to $t_{i+1}-t_{i}>t_{i}$, as well as the fact that we can find a constant $c_{5}$ such that $c_{4}t_{i}^{\alpha}\ell(t_{i})^{-1}\leq c_{5}$. Finally, we insert the entropy bound \eqref{eq:entropy-bound} and the averaged 1-spin expectation bound \eqref{eq:condition-bound} into \eqref{eq:up3}. The proof is completed by choosing $C_{0}=\max\{4c_{2}(\alpha+1),256c_{M}c_{1}c_{5}\}$.
\end{proof}

\appendix
\section{Proof of Bodineau-Helffer inequality for discrete spins}
\label{app:BHI}

In this appendix, we present the proof of Lemma \ref{th:BHI}, which is similar to the diffusive case discussed in \cite[Lemma 5]{OR07}. Since the proof does not involve the geometry of the graph and the boundary condition, we will omit the $G$ subscripts and the $\omega$ superscripts for brevity in the following contexts. 

\begin{proof}[Proof of Lemma \ref{th:BHI}]
Without loss of generality, we assume $\int F(\sigma) d\mu=1$, where $\mu$ denotes the Ising measure on $G$. Since $\mu$ is the unique invariant measure of the dynamics, we know that $P_{\infty}F=P_{0}F=\int F(\sigma)d\mu=1$. By straightforward calculation, we have
    \begin{align}
        \cov(F,\sigma_{x})&=\int\sigma_{x}\big[(P_{0}F)(\sigma)-(P_{\infty}F)(\sigma)\big]d\mu\notag\\
        &=-\int\int_{0}^{\infty}\sigma_{x}(\cL P_{t}F)(\sigma)d\mu dt\notag\\
        &=-\int\int_{0}^{\infty}\sigma_{x}\;c(x,\sigma)\big[(P_{t}F)(\sigma^{x})-(P_{t}F)(\sigma)\big]d\mu dt.
    \end{align}
For the last equality, we applied the self-adjointness of the generator $\cL$ and canceled the sum over other vertices. Now we use the identity $a-b=(\sqrt{a}-\sqrt{b})(\sqrt{a}+\sqrt{b})$ for positive constants $a,b$ and the detailed balance condition for the flipping rates to get
\begin{align}\label{eq:balance}
    \vert\cov(F,\sigma_{x})\vert&\leq\int_{0}^{\infty}\int c(x,\sigma)\vert\nabla_{x}P_{t}F(\sigma)\vert d\mu dt\notag\\
    &=\int_{0}^{\infty}\int c(x,\sigma)\vert\nabla_{x}\sqrt{P_{t}F}(\sigma)\vert\times\vert\sqrt{P_{t}F(\sigma^{x})}+\sqrt{P_{t}F(\sigma)}\vert d\mu dt\notag\\
    &\leq2c_{M}^{1/2}\int^{\infty}_{0}\int c(x,\sigma)^{1/2}\vert\nabla_{x}\sqrt{P_{t}F}(\sigma)\vert\times\sqrt{P_{t}F(\sigma)}d\mu dt.
\end{align}
For any $\varepsilon\in(0,\gamma)$, we take the square and sum over $x\in G$ to get
\begin{align}\label{eq:calculation}
\sum_{x\in G}\cov(F,\sigma_{x})^{2}&\leq4c_{M}\sum_{x\in G}\Big(\int^{\infty}_{0}\big[\int c(x,\sigma)\vert\nabla_{x}\sqrt{P_{t}F}(\sigma)\vert^{2}d\mu\big]^{1/2}\big[\int (P_{t}F)(\sigma)d\mu\big]^{1/2}dt\Big)^{2}\notag\\
&=4c_{M}\sum_{x\in G}\frac{1}{\varepsilon^{2}}\Big(\int^{\infty}_{0}\varepsilon e^{\varepsilon t}\big[\int e^{-2\varepsilon t}c(x,\sigma)\vert\nabla_{x}\sqrt{P_{t}F}(\sigma)\vert^{2}d\mu\big]^{1/2}dt\Big)^{2}\notag\\
&\leq8c_{M}\sum_{x\in G}\frac{1}{\varepsilon^{2}}\int_{0}^{\infty}D(\sqrt{P_{t}F})de^{\varepsilon t}\notag\\
&\leq-8c_{M}\sum_{x\in G}\int_{0}^{\infty}\frac{1}{\varepsilon}e^{\varepsilon t}\frac{d}{dt}\E_{\mu}\Phi(P_{t}F) dt.
\end{align}
In particular, for the derivation of \eqref{eq:calculation}, we reapplied the de Bruijn identity: $\partial_{t}\E_{\mu}\Phi(P_{t}F)=\int P_{t}F\cL\log P_{t}Fd\mu\leq-D(\sqrt{P_{t}F})$. Thus, the inequality $\E_{\mu}\Phi(P_{t}F)\leq e^{-\gamma t}\E_{\mu}\Phi(F)$ (which follows from \eqref{eq:LSI}) and an easy integration by parts in time together imply 
\begin{align}
    \sum_{x\in G}\cov(F,\sigma_{x})^{2}&\leq\frac{8c_{M}}{\varepsilon}\E_{\mu}\Phi(F)+8c_{M}\int_{0}^{\infty} e^{\varepsilon t}e^{-\gamma t}\E_{\mu}\Phi(F) dt\notag\\
    &=8c_{M}\Big(\frac{1}{\varepsilon}+\frac{1}{\gamma-\varepsilon}\Big)\E_{\mu}\Phi(F).
\end{align}
Optimizing in $\varepsilon\in(0,\gamma)$ completes the proof of the inequality:
\begin{equation}
    \sum_{x\in G}\cov(F,\sigma_{x})^{2}\leq\frac{32c_{M}}{\gamma}\E_{\mu}\Phi(F)\leq\frac{64c_{M}}{\gamma^{2}}D(\sqrt{F}).
\end{equation}
\end{proof}
\begin{remark}
    A similar upper bound for the sum of squared covariances should also hold for stochastic dynamics in the continuum scaling limit, provided that the jump rates are bounded uniformly in the scaling parameter.
\end{remark}

\begin{ack}
    I wish to thank Roland Bauerschmidt and Benoit Dagallier for suggesting this problem, many helpful discussions, and proofreading on numerous drafts of this paper. In addition, I would like to thank Benoit Dagallier for explaining the proof of the Bodineau-Helffer inequality, as well as Romain Panis for the discussion on the high-dimensional critical two-point function. Last but not least, I must thank the two anonymous referees for giving valuable comments.
\end{ack}

\end{document}